\journal{}
\newcommand{\tnorm}{\@ifstar\@tnorms\@tnorm}
\newcommand{\@tnorms}[1]{%
  \left|\mkern-1.5mu\left|\mkern-1.5mu\left|
   #1
  \right|\mkern-1.5mu\right|\mkern-1.5mu\right|
}
\newcommand{\@tnorm}[2][]{%
  \mathopen{#1|\mkern-1.5mu#1|\mkern-1.5mu#1|}
  #2
  \mathclose{#1|\mkern-1.5mu#1|\mkern-1.5mu#1|}
}
\newcommand{\jump}[1]{\llbracket #1 \rrbracket}
\newtheorem{proposition}{Proposition}
\newdefinition{remark}{Remark}
\newproof{proof}{Proof}
\begin{document}
\begin{frontmatter}
  \title{An interface-tracking space-time hybridizable/embedded
    discontinuous Galerkin method for nonlinear free-surface flows}
  \author{Giselle Sosa Jones\corref{cor1}\fnref{label1}}
  \ead{ggsosajo@central.uh.edu}    
  \address{Department of Mathematics, University of
  	Houston, USA}
  \author{Sander Rhebergen\fnref{label2}}
  \ead{srheberg@uwaterloo.ca}  
  \address{Department of Applied Mathematics, University of
    Waterloo, Canada}

  \cortext[cor1]{Corresponding author}
  \fntext[label1]{\url{https://orcid.org/0000-0001-7505-8256}}
  \fntext[label2]{\url{https://orcid.org/0000-0001-6036-0356}}
  \begin{abstract}
    We present a compatible space-time hybridizable/embedded
    discontinuous Galerkin discretization for nonlinear free-surface
    waves. We pose this problem in a two-fluid (liquid and gas) domain
    and use a time-dependent level-set function to identify the sharp
    interface between the two fluids. The incompressible two-fluid
    equations are discretized by an exactly mass conserving space-time
    hybridizable discontinuous Galerkin method while the level-set
    equation is discretized by a space-time embedded discontinuous
    Galerkin method. Different from alternative discontinuous Galerkin
    methods is that the embedded discontinuous Galerkin method results
    in a continuous approximation of the interface. This, in
    combination with the space-time framework, results in an
    interface-tracking method without resorting to smoothing
    techniques or additional mesh stabilization terms.
  \end{abstract}
  \begin{keyword}
    Navier--Stokes \sep hybridizable \sep discontinuous
    Galerkin \sep space-time \sep free-surface waves \sep
    interface-tracking
  \end{keyword}
\end{frontmatter}
\section{Introduction}
\label{sec:introduction}

Free-surface problems arise in many real-world applications such as in
the design of ships and offshore structures, modeling of tsunamis, and
dam breaking. Mathematically, nonlinear free-surface wave problems are
described by a set of partial differential equations that govern the
movement of the fluid together with certain nonlinear boundary
conditions that describe the free-surface. Modeling such problems is
challenging because the boundary of the computational domain depends
on the solution of the problem. This implies that there is a strong
coupling between the fluid and the free-surface, and the domain must be
continuously updated to track the changes in the free-surface.

Discontinuous Galerkin (DG) finite element methods posses excellent
conservation and stability properties, and provide higher-order
accurate approximate solutions. DG methods are therefore well suited
for the spatial discretization of free-surface problems. Hybridizable
discontinuous Galerkin (HDG) methods possess the same properties as
the DG method, but at a lower computational cost \cite{Kirby:2012,
  Yakovlev:2016, Cockburn:2009a}. This is achieved by reducing the
number of globally coupled degrees-of-freedom. In HDG methods,
so-called facet variables are introduced that live only on the facets
of the mesh. The HDG method is then constructed such that
communication between element unknowns is done only through this facet
variable. This construction allows for static condensation which
significantly reduces the total number of globally coupled
degrees-of-freedom; it is possible to eliminate the element
degrees-of-freedom and solve a linear system only for the facet
degrees-of-freedom. Given the facet degrees-of-freedom, the element
variables can be reconstructed element-wise. For incompressible flows
the HDG method can be constructed such that the approximate velocity
is $H(\text{div})$-conforming and point-wise divergence free on the
elements (see for example \cite{Fu:2019, Lehrenfeld:2016,
  Rhebergen:2018a}). We remark that to lower the number of globally
coupled degrees-of-freedom even further one may impose that the facet
variables are continuous between facets resulting in the embedded
discontinuous Galerkin (EDG) method \cite{Cockburn:2009b, Guzey:2007,
  Rhebergen:2020}.

Free-surface problems can be viewed as two-fluid, e.g. liquid and gas,
flow problems and so level-set methods can be used for their
discretization. Level-set methods were first introduced in
\cite{Osher:1988} and have since been used for multi-phase flows
\cite{Chang:1996, Sussman:1994}, and for free-surface flows
\cite{Hesthaven:2006, Lin:2005, Marchandise:2006}. Using the two-fluid
approach, the fluid equations are solved in a domain that contains
both liquid and gas phases. In this set-up, the viscosity and density
are defined as piece-wise constant functions that are discontinuous
across the liquid-gas interface. A level-set function, which satisfies
an advection equation where the advection field is the velocity of the
fluid, is defined such that it is positive in one of the fluids and
negative in the other. The zero level-set then corresponds to the
interface between the two fluids. Traditionally, when using level-set
methods, the mesh remains fixed throughout the computation. This
results in there being mesh elements in which the density and
viscosity take two different values. To handle this, the density and
viscosity of both fluids are smoothed throughout a band around the
interface. This results in non-physical fluid properties, and
introduces an extra parameter (the thickness of the band) in the
discretization of the problem. Instead, in this paper we consider an
interface-tracking method by solving the time dependent advection
equation for the level-set function and moving the mesh to fit the
zero level-set. Since every mesh element will belong to only one of
the fluids' domains, no smoothing is necessary of the density and
viscosity. Two well-known approaches to handle moving meshes are the
Arbitrary Lagrangian-Eulerian (ALE) approach \cite{Labeur:2009,
  Fu:2020, Neunteufel:2020} and the space-time approach
\cite{Vegt:2008, Hughes:1988, Masud:1997, Ndri:2001, Zanotti:2015}. In
this paper we consider the space-time framework.

We describe the two-fluid model and its space-time EDG/HDG
discretization in, respectively, \cref{sec:problem,sec:ST_HDG}. The
discretization of the incompressible two-fluid model is based on the
space-time HDG method developed in \cite{Horvath:2019, Horvath:2020}
for the single-phase incompressible Navier--Stokes equations. This
discretization, like the ALE discretizations in \cite{Fu:2020,
  Neunteufel:2020}, is exactly mass conserving, even on moving
meshes. For the level-set equation, however, we do not use HDG. A
continuous representation of the zero level-set is required to fit the
mesh to the interface between the two fluids. The HDG method, however,
results in a discontinuous polynomial approximation of the level-set
function and so a smoothing technique would be required to obtain a
continuous approximation of the interface. It was shown in
\cite{Aizinger:2006}, however, that such smoothing may lead to
instabilities unless the discretization is modified by adding extra
stabilization terms. Instead, we propose to use a space-time EDG
discretization for the level-set equation. The EDG method directly
results in a \emph{continuous} approximation to the free-surface
resulting in a straightforward approach to update the mesh. In
\cref{sec:ST_HDG} we furthermore show that the space-time HDG method
for the incompressible two-fluid model is \emph{compatible} with the
space-time EDG method for the level-set equation, i.e., that the
space-time EDG method is able to preserve the constant solution. As
discussed in \cite{Dawson:2004}, for discontinuous Galerkin methods
compatibility is a stronger statement than local conservation of the
flow field. It was furthermore shown in \cite{Dawson:2004} that if a
method is not compatible it may produce erroneous solutions. In
\cref{sec:coupling_and_meshmovement} we describe the solution
algorithm while numerical simulations in \cref{sec:numerical_results}
demonstrate optimal rates of convergence and that our discretization
is energy-stable under mesh movement. We furthermore apply the
discretization to simulate sloshing in a tank and flow past a
submerged obstacle. Conclusions are drawn in \cref{sec:conclusions}.

\section{The space-time incompressible two-fluid flow model}
\label{sec:problem}

Let us consider a bounded domain $\Omega \subset \mathbb{R}^2$ and let
$I=(0,t_N]$ denote our time interval of interest. We then introduce a
space-time domain by
$\mathcal{E} := \Omega \times I \subset \mathbb{R}^3$. We will assume
that the space-time domain $\mathcal{E}$ is divided into two
non-overlapping polygonal regions, $\mathcal{E}_\ell$ and
$\mathcal{E}_g$, such that
$\mathcal{E} = \mathcal{E}_\ell \cup \mathcal{E}_g$. In what follows,
$\mathcal{E}_\ell$ and $\mathcal{E}_g$ represent, respectively, the
liquid and gas regions of the space-time domain. We denote the liquid
and gas regions at a particular time level $\tau$ by
$\Omega_\ell(\tau) := \cbr[0]{(x,t) \in \mathcal{E}_\ell\ :\ t=\tau}$
and $\Omega_g(\tau) := \cbr[0]{(x,t) \in \mathcal{E}_g\ :\
  t=\tau}$. Here $x=(x_1,x_2)$. Note that the spatial domains
$\Omega_\ell$ and $\Omega_g$ are \emph{time-dependent}.

The space-time interface between the liquid and gas regions is defined
as
\begin{equation}
  \label{eq:st_interface} 
  \mathcal{S} := \cbr[1]{ (x,t) \in \mathcal{E}\ :\ x_2 = \zeta(x_1,t) },
\end{equation}
where $\zeta(x_1, t)$ is the wave height. We will denote the interface
at time level $\tau$ by
$\Gamma_s(\tau) := \cbr[0]{(x,t) \in \mathcal{S}\ :\ t=\tau}$. A plot
of the domain at some time level $\tau$ is given in \cref{fig:domain}.

\begin{figure}[tbp]
  \begin{center}
    \begin{tikzpicture}[scale=0.8,
      important line/.style={thick},
      ]
      
      \draw[important line] (12,0) -- (12,8) ;
      \draw[important line] (12,8) -- (0,8) ;
      \draw[important line] (0,8) -- (0,0) ;

      \draw[color=blue, name path=A] (0,4) .. controls (3,2) and (4,5) .. (6,4);
      \draw[color=blue, name path=B] (6,4) .. controls (7,3) and (9,6) .. (12,4);

      \draw[name path=C] (0,0) .. controls (2,0.5) and (4,0.5) .. (6,0);
      \draw[name path=D] (6,0) .. controls (8,-0.5) and (10,-0.5) .. (12,0);

      \tikzfillbetween[of=A and C]{cyan, opacity=0.2};
      \tikzfillbetween[of=B and D]{cyan, opacity=0.2};

      \draw[dashed] (0,4) -- (12.8,4) ;
      \draw (13.8,4) node{$x_2 = 0$};
      \draw (10,5) node{$\Gamma_s(\tau)$};
      \draw (6,2) node{Liquid region $\Omega_\ell(\tau)$};
      \draw (6,6) node{Gas region $\Omega_g(\tau)$};
    \end{tikzpicture}    
  \end{center}
  \caption{A description of the two-fluid flow domain
    $\Omega \subset \mathbb{R}^2$ at time $t=\tau$.}
  \label{fig:domain}
\end{figure}
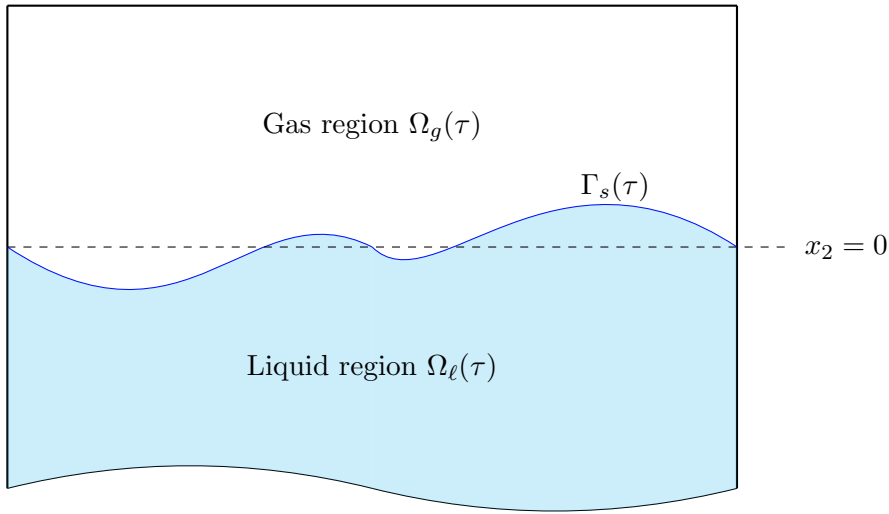

The position of the interface \cref{eq:st_interface} is not known a
priori. To find its position we introduce the level-set function
$\phi(x,t) = \zeta(x_1, t) - x_2$ and the Heaviside function $H(\phi)$
which is defined by
\begin{equation*}
  H(\phi) =
  \begin{cases}
    1 & \text{if } \phi > 0,
    \\
    0 & \text{if } \phi < 0.
  \end{cases}
\end{equation*}
We note that the interface corresponds to the zero level-set,
$\phi = 0$.

Let $k$ be a subscript to denote a liquid ($k=\ell$) or a gas ($k=g$)
property. Then given the dynamic viscosities $\mu_k \in \mathbb{R}^+$,
the constant densities $\rho_k \in \mathbb{R}^+$, and the constant
acceleration due to gravity $g$, the space-time formulation of the
incompressible two-fluid flow model for the velocity
$u : \mathcal{E} \to \mathbb{R}^2$ and pressure
$p : \mathcal{E} \to \mathbb{R}$ is given by
\begin{subequations}
  \begin{align}
    \label{eq:NS_mom}
    \rho \del[1]{\partial_t u + u \cdot \nabla u} + \nabla p - \nabla \cdot 2 \mu \varepsilon(u) &=  - \rho g e_2
    && \text{ in } \mathcal{E},
    \\
    \label{eq:NS_mass}
    \nabla \cdot u &= 0
    && \text{ in } \mathcal{E},
    \\
    \label{eq:NS_kin}
    \partial_t\phi + u \cdot \nabla \phi &= 0
    && \text{ in } \mathcal{E},
  \end{align}
  \label{eq:twofluidNS}
\end{subequations}
where $\varepsilon(u) := (\nabla u + (\nabla u)^T) / 2$ is the
symmetric gradient, $e_2$ is the unit vector in the $x_2$-direction
and
\begin{subequations}
  \label{eq:uprhomu}
  \begin{align}
    \label{eq:uprhomu_a}
    u &= u_g + (u_\ell - u_g)H(\phi), & p &= p_g + (p_\ell - p_g)H(\phi), \\
    \label{eq:uprhomu_b}
    \rho &= \rho_g + (\rho_\ell - \rho_g)H(\phi), & \mu &= \mu_g + (\mu_\ell - \mu_g)H(\phi).
  \end{align}
\end{subequations}

Let $\Omega_0 := \cbr[0]{(x,t) \in \partial\mathcal{E}\ :\ t=0}$ and
similarly
$\Omega_N := \cbr[0]{(x,t) \in \partial\mathcal{E}\ :\ t=t_N}$. Then
the boundary of the space-time domain $\mathcal{E}$ is partitioned
such that
$\partial\mathcal{E} = \partial\mathcal{E}^D \cup
\partial\mathcal{E}^N \cup \Omega_0 \cup \Omega_N$, where there is no
overlap between the four sets. Here, $\partial\mathcal{E}^D$ and
$\partial\mathcal{E}^N$ denote, respectively, the Dirichlet and
Neumann parts of the space-time boundary. The space-time outward unit
normal vector to $\partial\mathcal{E}$ is denoted by $(n_t, n)$, with
$n_t \in \mathbb{R}$ the temporal component and $n \in \mathbb{R}^2$
the spatial component. We define the inflow boundary
$\partial\mathcal{E}^-$ as the portion of $\partial\mathcal{E}^N$ on
which $n_t + u\cdot n < 0$. The outflow boundary is then defined as
$\partial\mathcal{E}^+ := \partial\mathcal{E}^N \backslash
\partial\mathcal{E}^-$. We prescribe the following boundary and
initial conditions:
\begin{subequations}
  \label{eq:bcic}
  \begin{align}
    \label{eq:bcic_a}
    u &= 0 && \text{on } \partial\mathcal{E}^D,
    \\
    \label{eq:bcic_b}
    \rho\sbr[0]{n_t + u\cdot n - \max\del[0]{n_t + u\cdot n,0}}u + \del[0]{p\mathbb{I} - 2\mu\varepsilon(u)}n
      &=f && \text{on } \partial\mathcal{E}^N,
    \\
    u(x,0) &= u_0(x) && \text{in } \Omega_0,
    \\
    \label{eq:bcic_d}
    -(n_t + u \cdot n)\phi &= r && \text{on } \partial\mathcal{E}^-,
    \\
    \label{eq:bcic_e}
    \phi(x,0) &= \phi_0(x) && \text{in } \Omega_0,
  \end{align}
\end{subequations}
where the boundary data $f:\partial\mathcal{E}^N \to \mathbb{R}^d$ and
$r:\partial\mathcal{E}^- \to \mathbb{R}$, and the divergence-free
initial condition $u_0 : \Omega_0 \to \mathbb{R}^d$ are
given. Furthermore, $\phi_0(x) := \zeta_0(x_1) - x_2$ with
$\zeta_0(x_1)$ the given initial wave height.

We remark that the two-fluid problem assumes continuity of the
velocity and of the diffusive flux across the interface $\mathcal{S}$,
i.e.,
\begin{subequations}
  \label{eq:interfaceassump}
  \begin{align}
    \label{eq:continuity_u}
    u_\ell &= u_g && \text{on } \mathcal{S},
    \\
    \label{eq:continuity_stress}
    \del[0]{p_\ell\mathbb{I} - 2\mu_\ell\varepsilon(u_\ell)}n_s
        &=
          \del[0]{p_g\mathbb{I} - 2\mu_g\varepsilon(u_g)}n_s && \text{on } \mathcal{S},
  \end{align}
\end{subequations}
where $n_s$ is the normal vector on the interface $\Gamma_s(t)$
pointing outwards from $\Omega_\ell(t)$.

\section{The space-time hybridizable/embedded discontinuous Galerkin
  method}
\label{sec:ST_HDG}

In this section we will introduce the space-time discretization for
the two-fluid problem \cref{eq:twofluidNS,eq:bcic}. In particular we
will introduce a space-time hybridizable discontinuous Galerkin (HDG)
for the momentum and mass \cref{eq:NS_mom,eq:NS_mass} coupled to a
space-time embedded discontinuous Galerkin (EDG) discretization of the
level-set equation \cref{eq:NS_kin}.

\subsection{Space-time notation}
\label{ss:ST_notation}

We will use notation similar to, for example, \cite{Horvath:2020}. For
this we first partition the time interval $I$ into time levels
$0 = t_0 < t_1 < t_2 < \hdots < t_N$ and denote the $n^{\text{th}}$
time interval by $I_n = (t_n,t_{n+1})$. A `time-step' is defined as
the length of a time interval, i.e., $\Delta t = t_{n+1}-t_n$. Next,
we define the $n^{\text{th}}$ space-time slab as
$\mathcal{E}^n := \mathcal{E} \cap \del{I_n \times \mathbb{R}^2}$, we
define the spatial domain at time level $t_n$ as
$\Omega_n := \cbr[0]{(x,t) \in\mathcal{E} : t = t_n}$, and note that
the boundary of a space-time slab, denoted by $\partial\mathcal{E}^n$,
can be divided into $\Omega_n$, $\Omega_{n+1}$, and
$\mathcal{Q}_{\mathcal{E}}^n := \partial\mathcal{E}^n \backslash
\del{\Omega_{n+1} \cup \Omega_n}$.

Following \cite{Horvath:2019, Horvath:2020} we consider a tetrahedral
space-time mesh which is constructed as follows. First, the triangular
spatial mesh of $\Omega_n$ is extruded to the new time level $t_{n+1}$
according to the domain deformation prescribed by the wave height.
(Note that the wave height is not known a priori and so, as we will
describe in \cref{sec:coupling_and_meshmovement}, an iterative
procedure is used to find the final approximation to the domain at
time level $t_{n+1}$.) Each element of this prismatic space-time mesh
is then subdivided into three tetrahedrons. We denote the space-time
triangulation in the space-time slab $\mathcal{E}^n$ by
$\mathcal{T}^n := \cbr[0]{\mathcal{K}}$. The space-time elements
$\mathcal{K} \in \mathcal{T}^n$ that lie in the liquid region of the
space-time slab form the triangulation
$\mathcal{T}^n_\ell$. $\mathcal{T}^n_g$ is defined similarly but for the
gas region. The triangulation of the whole space-time domain
$\mathcal{E}$ is denoted by $\mathcal{T} :=
\cup_{n}\mathcal{T}^n$.

The boundary of a space-time tetrahedron
$\mathcal{K}_j \in \mathcal{T}^n$ is denoted by
$\partial\mathcal{K}_j$ and the outward unit space-time normal vector
on the boundary of $\mathcal{K}_j \in \mathcal{T}^n$ is given by
$(n_t^{\mathcal{K}_j},n^{\mathcal{K}_j})$. The boundary
$\partial\mathcal{K}_j$ consists of at most one face that belongs to a
time level (on which $\envert[0]{n_t^{\mathcal{K}_j}} = 1$). We denote
this face by $K_j^n$ if $n_t^{\mathcal{K}_j} = -1$ and by $K_j^{n+1}$
if $n_t^{\mathcal{K}_j} = 1$. The remaining faces of
$\partial\mathcal{K}_j$ are denoted by
$\mathcal{Q}_{\mathcal{K}_j}^n := \partial\mathcal{K}_j \backslash
K_j^n$ or
$\mathcal{Q}_{\mathcal{K}_j}^n := \partial\mathcal{K}_j \backslash
K_j^{n+1}$. Note that the space-time normal on
$\mathcal{Q}_{\mathcal{K}_j}^n$ depends on the grid velocity
$v_g \in \mathbb{R}^2$ as follows:
$n_t^{\mathcal{K}_j} = -v_g \cdot n$. This relation allows to switch
between a space-time formulation and an arbitrary Lagrangian Eulerian
(ALE) formulation (see, e.g., \cite{Vegt:2002}). In the remainder of
this article, to simplify notation, we drop the sub- and superscript
when referring to the space-time normal vector and the space-time cell
wherever no confusion will occur.

In a space-time slab $\mathcal{E}^n$, the set of all faces for which
$\envert[0]{n_t} \neq 1$ is denoted by $\mathcal{F}^n$ while the union
of these faces is denoted by $\Gamma^n$. By
$\mathcal{F}_{\mathcal{S}}^n$ we denote all the faces in
$\mathcal{F}^n$ that lie on the interface $\mathcal{S}$. Similarly,
the set of all faces in $\mathcal{F}^n$ that lie on the boundary of
the space-time domain is denoted by $\mathcal{F}^n_B$. The remaining
set of (interior) faces is denoted by $\mathcal{F}_I^n$. Then
$\mathcal{F}^n = \mathcal{F}_I^n \cup \mathcal{F}_{\mathcal{S}}^n \cup
\mathcal{F}_B^n$. Furthermore, we will denote the set of faces that
lie on a Neumann boundary,
$\partial\mathcal{E}^N \cap \partial\mathcal{E}^n$, by
$\mathcal{F}_N^n$.

On each space-time slab $\mathcal{E}^n$ we consider the following
discontinuous finite element spaces on $\mathcal{T}^n$:
\begin{subequations}
  \begin{align}
    V_h^n
    & := \cbr[1]{v_h \in \sbr[0]{L^2(\mathcal{T}^n)}^2\, : 
      v_h\vert_{\mathcal{K}}
      \in \sbr[1]{P^{k}(\mathcal{K})}^2,
      \,\forall \mathcal{K}\in\mathcal{T}^n},
    \\
    Q_h^n
    &:= 
      \cbr[1]{q_h \in L^2(\mathcal{T}^n) : q_h \vert_{\mathcal{K}} 
      \in P^{k-1}(\mathcal{K}), 
      \,\forall \mathcal{K} \in \mathcal{T}^n},
    \\
    M_h^n
    &:= 
      \cbr[1]{m_h \in L^2(\mathcal{T}^n) : m_h \vert_{\mathcal{K}} 
      \in P^{k}(\mathcal{K}), 
      \,\forall \mathcal{K} \in \mathcal{T}^n},
  \end{align}
\end{subequations}
where $P^l(D)$ denotes the space of polynomials of degree $l$ on a
domain $D$. Additionally, we consider the following facet finite
element spaces:
\begin{subequations}
  \begin{align}
    \bar{V}_h^n
    & := \cbr[1]{\bar{v}_h \in \sbr[0]{L^2(\mathcal{F}^n)}^2\, : 
      \bar{v}_h\vert_{\mathcal{F}} 
      \in \sbr[0]{P^{k}(\mathcal{F})}^2,
      \,\forall \mathcal{F}\in\mathcal{F}^n,
      \bar{v}_h = 0 \text{ on } \partial\mathcal{E}^D \cap \partial\mathcal{E}^n},
    \\
    \bar{Q}_h^n
    &:= 
      \cbr[1]{\bar{q}_h \in L^2(\mathcal{F}^n) :
      \bar{q}_h \vert_{\mathcal{F}} 
      \in P^{k}(\mathcal{F}), 
      \,\forall \mathcal{F} \in \mathcal{F}^n},
    \\
    \label{eq:barMhn}
    \bar{M}_h^n
    &:= 
      \cbr[1]{\bar{m}_h \in L^2(\mathcal{F}^n) : \bar{m}_h \vert_{\mathcal{F}} 
      \in P^{k}(\mathcal{F}), 
      \,\forall \mathcal{F} \in \mathcal{F}^n} \cap C(\Gamma^n).
  \end{align}
\end{subequations}
Note that the facet velocity field in $\bar{V}_h^n$ and facet pressure
field in $\bar{Q}_h^n$ are discontinuous while the facet level-set
field in $\bar{M}_h^n$ is continuous. To simplify the notation, we
introduce $X_h^{v,n} := V_h^n \times \bar{V}_h^n$,
$X_h^{q,n} = Q_h^n \times \bar{Q}_h^n$,
$X_h^n = X_h^{v,n} \times X_h^{q,n}$, and
$X_h^{m,n} = M_h^n \times \bar{M}_h^n$. Function pairs in $X_h^{v,n}$,
$X_h^{q,n}$, and $X_h^{m,n}$ will be denoted by boldface.

\subsection{Discretization of the momentum and mass equations}
\label{ss:NS_disc}

An exactly mass conserving space-time HDG discretization for the
single-phase Navier--Stokes equations was introduced in
\cite{Horvath:2019, Horvath:2020}. Here we modify this discretization
to take into account that the density and viscosity may be
discontinuous across elements (see \cref{eq:uprhomu}).

Since the density and viscosity is constant on each element
$\mathcal{K} \in \mathcal{T}^n$ we define
$\rho_{\mathcal{K}} = \rho_g$ if $\mathcal{K} \in \mathcal{T}_g^n$ and
$\rho_{\mathcal{K}} = \rho_\ell$ if
$\mathcal{K} \in \mathcal{T}_\ell^n$ (and $\mu_{\mathcal{K}}$ is
defined similarly). The space-time HDG discretization for the momentum
and mass equations \cref{eq:NS_mom,eq:NS_mass} is then given by: find
$(\boldsymbol{u}_h, \boldsymbol{p}_h) \in X_h^{n}$ such that
\begin{multline}
  \label{eq:NSdisc}
  B_{conv}^n(\boldsymbol{u}_h, \boldsymbol{u}_h, \boldsymbol{v}_h)
  + B_{dif}^n(\boldsymbol{u}_h, \boldsymbol{v}_h)
  + B_{pu}^n(\boldsymbol{p}_h, \boldsymbol{v}_h)
  - B_{pu}^n(\boldsymbol{q}_h, \boldsymbol{u}_h) 
  \\
  = -\sum_{\mathcal{K} \in \mathcal{T}^n}\int_{\mathcal{K}}\rho_{\mathcal{K}} g e_2 \cdot v_h \dif x\dif t
  - \sum_{\mathcal{F} \in 	\mathcal{F}_N^n} \int_{\mathcal{F}} f \cdot \bar{v}_h \dif s
  + \int_{\Omega_n} \rho_{\mathcal{K}} u_h^-\cdot v_h \dif x,  
\end{multline}
for all $(\boldsymbol{v}_h, \boldsymbol{q}_h) \in X_h^{n}$, where
$u_h^- = \lim_{\epsilon\rightarrow 0} u_h(x,t_n - \epsilon)$ for
$n > 0$. When $n=0$ then $u_h^-$ is the projection of the initial
condition $u_0$ into $V_h^0 \cap H(\text{div})$ such that it is
exactly divergence-free. Let $\mathcal{K}^-$ and $\mathcal{K}$ denote
two elements that share a facet $\mathcal{F}$ on the boundary
$\mathcal{Q}_{\mathcal{K}}$ and let
$\widehat{\rho}_{\mathcal{K}} = \del[0]{\rho_{\mathcal{K}} +
  \rho_{\mathcal{K}^-}}/2$. We then define the convective trilinear
form $B_{conv}^n$ as
\begin{align}
  \label{eq:t_h_def}
  B_{conv}^n(\boldsymbol{w};\boldsymbol{u},\boldsymbol{v})
  :=& -\sum_{\mathcal{K} \in \mathcal{T}^n}\int_{\mathcal{K}}
      \rho_{\mathcal{K}}\del[0]{u\cdot \partial_t v + u\otimes w : \nabla v}\dif x\dif t
      + \sum_{\mathcal{K} \in \mathcal{T}^n} \int_{K^{n+1}} \rho_{\mathcal{K}} u \cdot v \dif x
  \\
  \nonumber
    &
      + \sum_{\mathcal{K} \in \mathcal{T}^n}\int_{\mathcal{Q}_{\mathcal{K}}}
      \widehat{\rho}_{\mathcal{K}}\del[0]{n_t + w\cdot n}\del[0]{u + \lambda\del[0]{\bar{u} - u}}\cdot \del[0]{v - \bar{v}}
      \dif s
  \\
  \nonumber
  & + \int_{\partial\mathcal{E}^+}\rho_{\mathcal{K}}\del[0]{n_t + \bar{w} \cdot n}\bar{u} \cdot \bar{v} \dif s
    + \sum_{\mathcal{K} \in \mathcal{T}^n}\int_{\mathcal{Q}_{\mathcal{K}}}
    \del[0]{\rho_{\mathcal{K}} - \widehat{\rho}_{\mathcal{K}}}\del[0]{n_t + w\cdot n} u \cdot v\dif s,
\end{align}
where $\lambda = 1$ if $n_t + w\cdot n < 0$ and $\lambda = 0$
otherwise. This corresponds to an upwind numerical flux on the element
boundaries. Note that if $\rho_{\ell} = \rho_g$, the last integral on
the right hand side of \cref{eq:t_h_def} disappears and the trilinear
form reduces exactly to the trilinear form introduced in
\cite{Horvath:2019} for single-phase flow. The last integral on the
right hand side of \cref{eq:t_h_def} is due to
\cref{eq:continuity_stress} not including any inertial effects; only
continuity of the diffusive flux is imposed across the interface.

The diffusive $B_{dif}^n$ and velocity-pressure coupling $B_{pu}^n$
bilinear forms are the same as in the single-phase flow problem, but
with a discontinuous viscosity. They are given by
\begin{subequations}
  \begin{align}
    \label{eq:a_h_def}
    B_{dif}^n(\boldsymbol{u}, \boldsymbol{v})
    := & \sum_{\mathcal{K} \in \mathcal{T}^n}\int_{\mathcal{K}} 2 \mu_{\mathcal{K}} \varepsilon(u) : \varepsilon(v) \dif x\dif t 
         + \sum_{\mathcal{K} \in \mathcal{T}^n}\int_{\mathcal{Q}_{\mathcal{K}}}\frac{2\mu_{\mathcal{K}}\alpha}{ h_{\mathcal{K}}}\del[0]{u - \bar{u}}\cdot \del[0]{v - \bar{v}}\dif s
    \\
    \nonumber
       &- \sum_{\mathcal{K} \in \mathcal{T}^n}\int_{\mathcal{Q}_{\mathcal{K}}}
         2\mu_{\mathcal{K}}\sbr[0]{\del[0]{u - \bar{u}} \cdot \varepsilon(v)n + \varepsilon(u)n \cdot \del[0]{v - \bar{v}}} \dif s,         
    \\
    \label{eq:b_h_def}
    B_{pu}^n(\boldsymbol{p},\boldsymbol{v})
    := & -\sum_{\mathcal{K} \in \mathcal{T}^n}\int_{\mathcal{K}} p\nabla\cdot v\dif x\dif t
         + \sum_{\mathcal{K} \in \mathcal{T}^n}\int_{\mathcal{Q}_{\mathcal{K}}} \del[0]{v - \bar{v}} \cdot n \bar{p} \dif s.
  \end{align}
\end{subequations}
Here $\alpha > 0$ is the interior-penalty parameter.

To find the solution of the nonlinear discrete problem
\cref{eq:NSdisc}, we use a Picard iteration scheme: in every
space-time slab, given $(\boldsymbol{u}_h^{k}, \boldsymbol{p}_h^{k})$
we seek a solution $(\boldsymbol{u}_h^{k+1}, \boldsymbol{p}_h^{k+1})$
to the linear discrete problem
\begin{multline}
  \label{eq:PicardNSdisc}
  B_{conv}^n(\boldsymbol{u}_h^{k}; \boldsymbol{u}_h^{k+1}, \boldsymbol{v}_h)
  + B_{dif}^n(\boldsymbol{u}_h^{k+1}, \boldsymbol{v}_h)
  + B_{pu}^n(\boldsymbol{p}_h^{k+1}, \boldsymbol{v}_h)
  - B_{pu}^n(\boldsymbol{q}_h, \boldsymbol{u}_h^{k+1}) 
  \\
  = -\sum_{\mathcal{K} \in \mathcal{T}^n}\int_{\mathcal{K}}\rho_{\mathcal{K}} g e_2 \cdot v_h \dif x\dif t
  - \sum_{\mathcal{F} \in \mathcal{F}_N^n} \int_{\mathcal{F}} f \cdot \bar{v}_h \dif s
  + \int_{\Omega_n} \rho_{\mathcal{K}} u_h^-\cdot v_h \dif x,  
\end{multline}
for all $(\boldsymbol{v}_h, \boldsymbol{q}_h) \in X_h^{n}$ and for
$k=0,1,2,\hdots$ until the following stopping criterium is met:
\begin{equation}
  \max\cbr{\frac{\norm[0]{u_h^{k} - u_h^{k-1}}_{\infty}}{\norm[0]{u_h^{k} - u_h^{0}}_{\infty}}, \frac{\norm[0]{p_h^{k} - p_h^{k-1}}_{\infty}}{\norm[0]{p_h^{k} - p_h^{0}}_{\infty}}} < \varepsilon_{u,p},
\end{equation}
where $\varepsilon_{u,p}$ is a user given parameter. We then set
$(\boldsymbol{u}_h, \boldsymbol{p}_h) = (\boldsymbol{u}_h^{k+1},
\boldsymbol{p}_h^{k+1})$.

\subsection{Discretization of the level-set equation}
\label{ss:levelset_disc}

The space-time EDG discretization for the level-set equation
\cref{eq:NS_kin} is a space-time extension of the EDG discretization
for the advection equation \cite{Wells:2011}: In each space-time slab
$\mathcal{E}^n$, for $n = 0,1,\ldots, N-1$, given $u$, find
$\boldsymbol{\Phi}_h \in X_h^{m,n}$ such that
\begin{equation}
  \label{eq:levelset_disc}
  C_{ls}(\boldsymbol{\Phi}_h, \boldsymbol{m}_h; u)
  = \sum_{\mathcal{K} \in \mathcal{T}^n}\int_{K^n}\phi_h^- m_h \dif x
  + \int_{\partial\mathcal{E}^-}r \bar{m}_h\dif s
  \qquad \forall \boldsymbol{m}_h \in X_h^{m,n},
\end{equation}
where $\phi_h^- = \lim_{\epsilon\rightarrow 0}\phi_h(x,t_n-\epsilon)$
for $n>0$. When $n=0$ $\phi_h^-$ is the projection of the initial
condition $\phi_0$ into $M_h^0$. The bilinear form $C_{ls}$ is given
by
\begin{multline}
  \label{eq:definition_c}
  C_{ls}(\boldsymbol{\Phi}, \boldsymbol{m}; u)
  = -\sum_{\mathcal{K} \in \mathcal{T}^n} \int_{\mathcal{K}} \del[0]{\phi\partial_t m + \phi u\cdot\nabla m}\dif x\dif t
  + \sum_{\mathcal{K} \in \mathcal{T}^n}\int_{K^{n+1}} \phi m \dif x 
  \\
  + \sum_{\mathcal{K} \in \mathcal{T}^n}\int_{\mathcal{Q}_{\mathcal{K}}}
  \del[0]{n_t + u\cdot n}\del{\phi + \lambda\del[0]{\bar{\phi} - \phi}}\del[0]{m - \bar{m}} \dif s
  + \int_{\partial\mathcal{E}^+}(n_t + u \cdot n) \bar{\phi} \bar{m}\dif s.
\end{multline}

\subsection{Properties of the discretization}

In this section we discuss properties of the space-time HDG/EDG
discretization, \cref{eq:NSdisc,eq:levelset_disc}, of the two-fluid
flow model.

First, we remark that the discretization conserves mass exactly. The
proof of this result is identical to \cite[Prop. 1]{Horvath:2020}:
that $u_h$ is exactly divergence free ($\nabla\cdot u_h = 0$) on the
elements follows by taking $\boldsymbol{v}_h = 0$, $\bar{q}_h=0$ and
$q_h = \nabla \cdot u_h$ in \cref{eq:NSdisc} while
$H(\text{div})$-conformity of $u_h$ (i.e., $\jump{u_h\cdot n}=0$ on
interior facets and $u_h\cdot n = \bar{u}_h\cdot n$ on boundary
facets) follows by setting $\boldsymbol{v}_h=0$, $q_h=0$, and
$\bar{q}_h = \jump{(u_h - \bar{u}_h)\cdot n}$ in \cref{eq:NSdisc}.

It was shown in \cite{Dawson:2004} for different flow/transport
discretizations that loss of accuracy and/or loss of global
conservation may occur if a discretization is not compatible. (Note
that compatibility for discontinuous Galerkin methods is a stronger
statement than local conservation of the flow field
\cite{Dawson:2004}.) The next result shows that since $u_h$ is exactly
divergence free, the space-time HDG discretization of the two-fluid
model \cref{eq:NSdisc} and the space-time EDG discretization of the
level-set equation \cref{eq:levelset_disc} are compatible.

\begin{proposition}[Compatibility]
  \label{prop:compatibility}
  If $u_h \in V_h^n$ is the velocity solution to the space-time HDG
  discretization \cref{eq:NSdisc}, then the space-time EDG
  discretization \cref{eq:levelset_disc} is: (i) globally conservative;
  and (ii) able to preserve the constant solution.
\end{proposition}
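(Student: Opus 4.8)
The plan is to test both discrete weak forms against carefully chosen functions and exploit the exact divergence-freeness and $H(\mathrm{div})$-conformity of $u_h$ established just before the statement. For part (ii), I would verify that $\boldsymbol{\Phi}_h$ with $\phi_h \equiv c$ on all elements and $\bar{\phi}_h \equiv c$ on all facets (a constant, hence in particular in $C(\Gamma^n)$, so it is an admissible member of $X_h^{m,n}$) solves \cref{eq:levelset_disc} whenever $\phi_h^- \equiv c$ and the boundary data is compatible with the constant (i.e.\ $r = -(n_t+u\cdot n)c$ on $\partial\mathcal{E}^-$). Substituting into \cref{eq:definition_c}: the volume term $-\int_{\mathcal{K}}(\phi\partial_t m + \phi u\cdot\nabla m)$ becomes $-c\int_{\mathcal{K}}(\partial_t m + u\cdot\nabla m)$; integrating by parts in space-time and using $\nabla\cdot u_h = 0$ elementwise converts this into boundary integrals over $\partial\mathcal{K}$ of $-c(n_t + u_h\cdot n)m$. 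The slab-top term $\int_{K^{n+1}}\phi m$ contributes $c\int_{K^{n+1}}m$. One then collects, face by face, the element-boundary contributions; because $\bar{\phi}-\phi = 0$ the upwind term collapses to $c\int_{\mathcal{Q}_{\mathcal{K}}}(n_t+u\cdot n)(m-\bar m)$, and the $\partial\mathcal{E}^+$ term gives $c\int_{\partial\mathcal{E}^+}(n_t+u\cdot n)\bar m$. On interior faces the two elementwise $(n_t + u_h\cdot n)$ factors are equal and opposite (shared normal) and $H(\mathrm{div})$-conformity gives $\jump{u_h\cdot n}=0$, so the $m$-contributions telescope against the upwind term and the $\bar m$-contributions cancel by single-valuedness of $\bar m$; on boundary faces $u_h\cdot n = \bar u_h\cdot n$ and one checks the inflow/outflow split against the right-hand-side terms $\int_{K^n}\phi_h^- m$ and $\int_{\partial\mathcal{E}^-}r\bar m$. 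What survives is exactly $c\int_{K^n}m\,dx$ over all $\mathcal{K}$ plus $-c\int_{\partial\mathcal{E}^-}(n_t+u\cdot n)\bar m$, matching the right-hand side. Hence the constant is preserved.

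For part (i), global conservation, I would take $\boldsymbol{m}_h = (1, 1)$ — that is, $m_h \equiv 1$ on every element and $\bar m_h \equiv 1$ on every facet, again admissible since the constant lies in $C(\Gamma^n)$ — in \cref{eq:levelset_disc}. With $m \equiv 1$ all gradient and time-derivative terms in \cref{eq:definition_c} vanish identically, so $C_{ls}$ reduces to $\sum_{\mathcal{K}}\int_{K^{n+1}}\phi_h\,dx + \sum_{\mathcal{K}}\int_{\mathcal{Q}_{\mathcal{K}}}(n_t + u_h\cdot n)(\phi_h + \lambda(\bar\phi_h - \phi_h))(1 - 1)\,ds + \int_{\partial\mathcal{E}^+}(n_t + u_h\cdot n)\bar\phi_h\,ds$. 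The element-boundary term drops because $m - \bar m = 0$, leaving $\sum_{\mathcal{K}}\int_{K^{n+1}}\phi_h\,dx + \int_{\partial\mathcal{E}^+}(n_t+u_h\cdot n)\bar\phi_h\,ds = \sum_{\mathcal{K}}\int_{K^n}\phi_h^-\,dx + \int_{\partial\mathcal{E}^-}r\,ds$. This is precisely a discrete statement of global conservation of $\int\phi\,dx$ over the space-time slab: the change in the integral of $\phi_h$ between consecutive time levels equals the net flux through the lateral space-time boundary (with inflow data $r$ and outflow determined by $\bar\phi_h$). I would then note this holds for each $n$ and, if desired, sum over slabs using $\phi_h^-$ at $t_n$ being the trace from the slab below.

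The point where the divergence-freeness of $u_h$ is genuinely needed is the space-time integration by parts in part (ii): without $\nabla\cdot u_h = 0$ on each $\mathcal{K}$, the identity $\int_{\mathcal{K}} c\,u_h\cdot\nabla m = \int_{\partial\mathcal{K}} c (u_h\cdot n) m - \int_{\mathcal{K}} c (\nabla\cdot u_h) m$ would leave a spurious volume residual and the constant would fail to be reproduced — this is exactly the "compatibility" obstruction of \cite{Dawson:2004}. For part (i) the argument is essentially algebraic and does not even need divergence-freeness, only that $\bar m_h$ may be taken identically one, which is guaranteed by the constants lying in $\bar M_h^n$.

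I expect the main obstacle to be organizing the face-by-face bookkeeping in part (ii) cleanly: one must be careful that each interior face $\mathcal{F}$ is visited once from each of its two adjacent elements with opposite space-time normals, that the upwind parameter $\lambda$ switches consistently between the two visits (so the $\phi$-vs-$\bar\phi$ selection is coherent), and that the boundary faces are correctly partitioned into the inflow part $\partial\mathcal{E}^-$ (where the data term $r\bar m$ enters) and the outflow part $\partial\mathcal{E}^+$ (where the $\bar\phi_h\bar m$ term enters), with the Dirichlet/initial faces handled by the $\int_{K^n}$ term. Once this bookkeeping is set up, every contribution either cancels in pairs or matches a right-hand-side term, and the result follows.
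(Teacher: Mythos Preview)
Your proposal is correct and follows essentially the same route as the paper. For part (ii) you substitute the constant pair $(\psi,\psi)$, integrate by parts, invoke $H(\mathrm{div})$-conformity and single-valuedness of $\bar m_h$ to cancel interior-face contributions, and reduce everything to the elementwise condition $\nabla\cdot u_h=0$; the paper does the same, only it integrates by parts in time and space separately and carries the term $\sum_{\mathcal{K}}\int_{\mathcal{K}} m_h\,\nabla\cdot u_h$ explicitly to the end before killing it, whereas you absorb it during the integration by parts. For part (i) you actually supply the one-line proof (test with $\boldsymbol{m}_h=(1,1)$), while the paper simply cites \cite{Wells:2011}.
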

\begin{proof}
  We first note that global conservation of the EDG discretization was
  shown in \cite{Wells:2011}. To show that the space-time EDG
  discretization \cref{eq:levelset_disc} is able to preserve the
  constant solution we present a space-time extension of the
  discussion in \cite[Section 3.4]{Cesmelioglu:2021}. For this, let
  the boundary and initial conditions in \cref{eq:bcic_d,eq:bcic_e} be
  given by, respectively, $\phi_0(x) = \psi$ and
  $r = -(n_t+u_h\cdot n)\psi$, where $\psi$ is a constant. Consider now
  the first space-time slab $\mathcal{E}^0$. The constant
  $\boldsymbol{\Phi}_h=(\psi,\psi)$ is preserved by
  \cref{eq:levelset_disc} if and only if
  \begin{equation}
    \label{eq:levelset_disc_comp}
    C_{ls}((\psi,\psi), \boldsymbol{m}_h; u_h)
    = \sum_{\mathcal{K} \in \mathcal{T}^0}\int_{K^n} \psi m_h \dif x
    - \int_{\partial\mathcal{E}^-}(n_t+u_h\cdot n)\psi \bar{m}_h\dif s
    \qquad \forall \boldsymbol{m}_h \in X_h^{m,n}.
  \end{equation}
  If $\psi = 0$ it is clear that \cref{eq:levelset_disc_comp}
  holds. Consider therefore the case that $\psi \neq 0$. Writing out
  the left hand side, dividing both sides by $\psi$, and integrating
  by parts in time, we find that \cref{eq:levelset_disc_comp} is
  equivalent to
  \begin{multline}
    \label{eq:definition_comp2}  
    - \sum_{\mathcal{K} \in \mathcal{T}^0} \int_{\mathcal{K}} u_h\cdot\nabla m_h\dif x\dif t
    - \sum_{\mathcal{K} \in \mathcal{T}^0} \int_{\mathcal{Q}_{\mathcal{K}}}n_tm_h \dif s
    + \sum_{\mathcal{K} \in \mathcal{T}^0}\int_{\mathcal{Q}_{\mathcal{K}}}
    \del[0]{n_t + u_h\cdot n}\del[0]{m_h - \bar{m}_h} \dif s
    \\
    + \int_{\partial\mathcal{E}^+}(n_t + u_h \cdot n) \bar{m}_h\dif s
    = 0,
  \end{multline}
  for all $\boldsymbol{m}_h \in X_h^{m,n}$. Using that
  $\nabla\cdot(u_h m_h) = u_h\cdot\nabla m_h + m_h\nabla\cdot u_h$ on
  each element $\mathcal{K}$, integration by parts in space,
  single-valuedness of $\bar{m}_h$ and $u_h\cdot n$ on interior facets
  (by the $H(\text{div})$-conformity of $u_h$), we find that the
  constant $\boldsymbol{\Phi}_h=(\psi,\psi)$ is preserved by
  \cref{eq:levelset_disc} if and only if
  \begin{equation}
    \label{eq:definition_comp3}  
    \sum_{\mathcal{K} \in \mathcal{T}^0} \int_{\mathcal{K}} m_h \nabla\cdot u_h \dif x\dif t
    = 0.
  \end{equation}
  Since the velocity solution to \cref{eq:NSdisc} is exactly
  divergence free on the elements, the result follows.
\end{proof}

We emphasize that compatibility in \Cref{prop:compatibility} between
the space-time HDG discretization \cref{eq:NSdisc} and the space-time
EDG discretization \cref{eq:levelset_disc} is a direct consequence of
the exact mass conservation property of the space-time HDG method for
the incompressible two-fluid flow equations. A discretization that is
not exactly mass conserving may not be compatible with
\cref{eq:levelset_disc}.

We end this section by showing consistency of the space-time HDG/EDG
discretization.

\begin{proposition}[Consistency]
  Let $u(x,t)$, $p(x,t)$, and $\phi(x,t)$ be the smooth solution to
  the two-fluid model, \cref{eq:twofluidNS,eq:bcic}. Let
  $\boldsymbol{u}=(u,u)$, $\boldsymbol{p}=(p,p)$, and
  $\boldsymbol{\Phi}=(\phi,\phi)$. Then
  \begin{multline}
    \label{eq:NSdisc-consistency}
    B_{conv}^n(\boldsymbol{u}, \boldsymbol{u}, \boldsymbol{v}_h)
    + B_{dif}^n(\boldsymbol{u}, \boldsymbol{v}_h)
    + B_{pu}^n(\boldsymbol{p}, \boldsymbol{v}_h)
    - B_{pu}^n(\boldsymbol{q}, \boldsymbol{u}_h) 
    \\
    = -\sum_{\mathcal{K} \in \mathcal{T}^n}\int_{\mathcal{K}}\rho_{\mathcal{K}} g e_2 \cdot v_h \dif x\dif t
    - \sum_{\mathcal{F} \in \mathcal{F}_N^n} \int_{\mathcal{F}} f \cdot \bar{v}_h \dif s
    + \int_{\Omega_n} \rho_{\mathcal{K}} u \cdot v_h \dif x \qquad \forall (\boldsymbol{v}_h, \boldsymbol{q}_h) \in X_h^{n},  
  \end{multline}
  and
  \begin{equation}
    \label{eq:levelset_disc_consistency}
    C_{ls}(\boldsymbol{\Phi}, \boldsymbol{m}_h; u)
    = \sum_{\mathcal{K} \in \mathcal{T}^n}\int_{K^n}\phi m_h \dif x
    + \int_{\partial\mathcal{E}^-}r \bar{m}_h\dif s
    \qquad \forall \boldsymbol{m}_h \in X_h^{m,n}.
  \end{equation}
\end{proposition}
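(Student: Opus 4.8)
The plan is to verify the two identities by substituting the exact smooth solution into the discrete forms and integrating by parts so as to "undo" the discretization, recovering the strong form of the equations together with the prescribed boundary and interface conditions. I would treat the two statements separately, starting with the level-set identity \cref{eq:levelset_disc_consistency} since it is simpler and illustrates the mechanism.

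For \cref{eq:levelset_disc_consistency}: substitute $\boldsymbol{\Phi}=(\phi,\phi)$ into $C_{ls}$ as defined in \cref{eq:definition_c}. Since $\phi$ is smooth across element interfaces, the facet jumps $\bar{\phi}-\phi$ vanish (the single-valued trace equals the element trace), so the $\mathcal{Q}_{\mathcal{K}}$ integral involving $\lambda(\bar\phi-\phi)$ drops, and on $\partial\mathcal{E}^+$ we have $\bar\phi=\phi$. Integrating by parts in space-time on each $\mathcal{K}$ the term $-\int_{\mathcal{K}}(\phi\partial_t m + \phi u\cdot\nabla m)$ and using $\nabla\cdot u=0$ (equation \cref{eq:NS_mass}) together with $\partial_t\phi + u\cdot\nabla\phi=0$ (equation \cref{eq:NS_kin}), the volume terms reduce to facet contributions. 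Collecting the facet terms on interior faces $\mathcal{F}_I^n$ (where the two element contributions cancel by continuity of $\phi$, $u\cdot n$, and single-valuedness of $\bar m$), on $\partial\mathcal{E}^+$ (where the outflow flux $(n_t+u\cdot n)\phi$ appears with the right sign), and on $\partial\mathcal{E}^-$ (where $-(n_t+u\cdot n)\phi = r$ by \cref{eq:bcic_d}), together with the time-level faces $K^n$ and $K^{n+1}$, yields exactly the right-hand side of \cref{eq:levelset_disc_consistency}. The bookkeeping is essentially the standard EDG-for-advection consistency argument of \cite{Wells:2011} lifted to the space-time slab.

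For \cref{eq:NSdisc-consistency}: substitute $\boldsymbol{u}=(u,u)$, $\boldsymbol{p}=(p,p)$. Again all facet jumps $u-\bar u$ and $p-\bar p$ vanish by smoothness, killing the interior-penalty and the symmetrization terms in $B_{dif}^n$ \cref{eq:a_h_def}, the second term of $B_{pu}^n$ \cref{eq:b_h_def}, and the $\lambda(\bar u - u)$ term in $B_{conv}^n$ \cref{eq:t_h_def}. What remains, after integrating by parts element-wise the terms $-\int_{\mathcal{K}}\rho_{\mathcal{K}}(u\cdot\partial_t v + u\otimes u:\nabla v)$, $\int_{\mathcal{K}}2\mu_{\mathcal{K}}\varepsilon(u):\varepsilon(v)$, and $-\int_{\mathcal{K}}p\nabla\cdot v$, must be shown to equal the right-hand side. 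Here one uses $\nabla\cdot u=0$ to simplify the convective term, the momentum equation \cref{eq:NS_mom} to absorb the volume residual into $-\rho g e_2$, continuity of $u$ and of the normal stress $(p\mathbb{I}-2\mu\varepsilon(u))n$ across interior and interface faces (\cref{eq:continuity_u,eq:continuity_stress}) to cancel interior face contributions, the Neumann condition \cref{eq:bcic_b} on $\mathcal{F}_N^n$ to produce the $-\int_{\mathcal{F}} f\cdot\bar v_h$ term, $\bar v_h=0$ on $\partial\mathcal{E}^D$, and the telescoping of the time-level terms ($\int_{K^{n+1}}\rho u\cdot v$ from $B_{conv}^n$ against the $\int_{\Omega_n}\rho u\cdot v_h$ on the right, with $u^-=u$ by smoothness). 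The two subtle points specific to the two-fluid setting are (a) handling the extra term $\int_{\mathcal{Q}_{\mathcal{K}}}(\rho_{\mathcal{K}}-\widehat\rho_{\mathcal{K}})(n_t+w\cdot n)u\cdot v$ — since this term is antisymmetric in the two element contributions across a face and $u$, $v$, $n_t+u\cdot n$ are each single-valued there, the sum over $\mathcal{Q}_{\mathcal{K}}$ collapses and is consistent with \cref{eq:continuity_stress} carrying no inertial jump; and (b) the $\mathcal{Q}_{\mathcal{K}}$ convective flux with $\widehat\rho_{\mathcal{K}}$ and $\lambda=0$ (inflow) versus $\lambda$ irrelevant here since $\bar u = u$ — one checks the averaged-density upwind flux reduces, on a continuous solution, to the physically correct flux and cancels in pairs on interior faces while matching $\partial\mathcal{E}^+$ with the boundary term in \cref{eq:bcic_b}.

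The main obstacle I expect is the careful accounting of the interface faces $\mathcal{F}_{\mathcal{S}}^n$ in the momentum identity: one must confirm that the density/viscosity-weighted terms generated across $\mathcal{S}$ exactly reflect the jump conditions \cref{eq:interfaceassump} — in particular that the last term of $B_{conv}^n$ with the density average $\widehat\rho_{\mathcal{K}}$ is precisely the one that makes \cref{eq:continuity_stress} (continuity of the purely diffusive-pressure flux, no inertial contribution) consistent, rather than some other interface balance. Everything else is routine elementwise integration by parts and cancellation of single-valued facet traces.
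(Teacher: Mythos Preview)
Your approach is the same as the paper's: substitute the exact solution, drop the facet terms that carry a factor $u-\bar u$, $p-\bar p$, or $\phi-\bar\phi$, integrate by parts element-wise, and use the strong equations together with single-valuedness on interior faces, the interface conditions \cref{eq:interfaceassump}, $\bar v_h=0$ on $\partial\mathcal{E}^D$, and the Neumann/inflow data to collapse to the right-hand side.

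One bookkeeping slip to watch: not \emph{all} the facet terms in $B_{dif}^n$ and $B_{pu}^n(\boldsymbol{p},\boldsymbol{v}_h)$ vanish when $u=\bar u$, $p=\bar p$. In $B_{dif}^n$ the penalty term and the symmetrization piece $(u-\bar u)\cdot\varepsilon(v_h)n$ do vanish, but the other piece $\varepsilon(u)n\cdot(v_h-\bar v_h)$ survives, since the \emph{test} jump $v_h-\bar v_h$ is not zero. Likewise the facet term in $B_{pu}^n(\boldsymbol{p},\boldsymbol{v}_h)$ survives (only in $-B_{pu}^n(\boldsymbol{q}_h,\boldsymbol{u})$ does the jump term drop). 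These surviving pieces are essential: after integrating the volume terms by parts they combine with the resulting boundary contributions to produce $\int_{\mathcal{Q}_{\mathcal{K}}}\del{p\mathbb{I}-2\mu_{\mathcal{K}}\varepsilon(u)}n\cdot\bar v_h$, tested against the single-valued facet function $\bar v_h$, which is exactly what you need for the interior-face cancellation and the Neumann boundary identification you describe. The paper carries these terms explicitly; once you do the same, your argument and theirs coincide.
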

\begin{proof}
  We first show \cref{eq:NSdisc-consistency}. By definition
  \cref{eq:t_h_def}, integration by parts, and using that
  $\widehat{\rho}_{\mathcal{K}}$, $u$ and $\bar{v}_h$ are
  single-valued on faces, we find
  \begin{multline}
    \label{eq:consistency_t}
    B_{conv}^n(\boldsymbol{u}; \boldsymbol{u}, \boldsymbol{v}_h)
    = \sum_{\mathcal{K} \in \mathcal{T}^n}\int_{\mathcal{K}}
    \rho_{\mathcal{K}}\del[0]{\partial_t u + u\cdot\nabla u} \cdot v_h\dif x\dif t
    + \sum_{\mathcal{K} \in \mathcal{T}^n} \int_{K^{n}} \rho_{\mathcal{K}} u \cdot v_h \dif x
    \\
    - \int_{\partial\mathcal{E}^-}\rho_{\mathcal{K}}\del[0]{n_t + u \cdot n} u \cdot \bar{v}_h \dif s.
  \end{multline}
  Similarly, by definition \cref{eq:a_h_def} and integration by parts,
  \begin{equation}
    \label{eq:consistency_a}
    B_{dif}^n(\boldsymbol{u}, \boldsymbol{v}_h)
    = -\sum_{\mathcal{K} \in \mathcal{T}^n}\int_{\mathcal{K}}  \nabla \cdot (2\mu_{\mathcal{K}}\varepsilon(u)) \cdot v_h \dif x \dif t
    + \sum_{\mathcal{K} \in \mathcal{T}^n}\int_{\mathcal{Q}_{\mathcal{K}}} 2\mu_{\mathcal{K}} \varepsilon(u)n \cdot \overline{v}_h \dif s,
  \end{equation}
  and by definition \cref{eq:b_h_def}, integration by parts, and using
  that $\nabla \cdot u = 0$,
  \begin{equation}
    \label{eq:consistency_b}
    B_{pu}^n(\boldsymbol{p}, \boldsymbol{v}_h)
    - B_{pu}^n(\boldsymbol{q}_h, \boldsymbol{u})
    = \sum_{\mathcal{K}\in\mathcal{T}^n} \int_{\mathcal{K}} \nabla p \cdot v_h \dif x\dif t 
    - \sum_{\mathcal{K} \in \mathcal{T}^n}\int_{\mathcal{Q}_{\mathcal{K}}} \bar{v}_h\cdot n p \dif s.
  \end{equation}
  Combining \cref{eq:consistency_t,eq:consistency_a,eq:consistency_b},
  we obtain
  \begin{align}
    B_{conv}^n(\boldsymbol{u}; \boldsymbol{u}, \boldsymbol{v}_h)
    &+ B_{dif}^n(\boldsymbol{u}, \boldsymbol{v}_h)
    + B_{pu}^n(\boldsymbol{p}, \boldsymbol{v}_h)
      - B_{pu}^n(\boldsymbol{q}_h, \boldsymbol{u})
    \\ \nonumber
    =&
    \sum_{\mathcal{K} \in \mathcal{T}^n}\int_{\mathcal{K}}
       \rho_{\mathcal{K}}\sbr[1]{\del[0]{\partial_t u + u\cdot\nabla u} + \nabla p - \nabla \cdot (2\mu_{\mathcal{K}}\varepsilon(u)) }\cdot v_h\dif x\dif t
       + \sum_{\mathcal{K} \in \mathcal{T}^n} \int_{K^{n}} \rho_{\mathcal{K}} u \cdot v_h \dif x
    \\ \nonumber
    & - \sum_{\mathcal{K} \in \mathcal{T}^n}\int_{\mathcal{Q}_{\mathcal{K}}} \del{p\mathbb{I} - 2\mu_{\mathcal{K}}\varepsilon(u)}n \cdot \bar{v}_h \dif s
      - \int_{\partial\mathcal{E}^-}\rho_{\mathcal{K}}\del[0]{n_t + u \cdot n} u \cdot \bar{v}_h \dif s.
  \end{align}
  The last two terms may be combined using that $\bar{v}_h = 0$ on
  $\partial\mathcal{E}^D$ and the single-valuedness of
  $\del[0]{p\mathbb{I} - 2\mu_{\mathcal{K}}\varepsilon(u)}n$ on element
  boundaries:
  \begin{align}
    B_{conv}^n(\boldsymbol{u}; \boldsymbol{u}, \boldsymbol{v}_h)
    &+ B_{dif}^n(\boldsymbol{u}, \boldsymbol{v}_h)
    + B_{ls}^n(\boldsymbol{p}, \boldsymbol{v}_h)
      - B_{ls}^n(\boldsymbol{q}_h, \boldsymbol{u})
    \\ \nonumber
    =&
    \sum_{\mathcal{K} \in \mathcal{T}^n}\int_{\mathcal{K}}
       \rho_{\mathcal{K}}\sbr[1]{\del[0]{\partial_t u + u\cdot\nabla u} + \nabla p - \nabla\cdot(2\mu_{\mathcal{K}}\varepsilon(u))}\cdot v_h\dif x\dif t
       + \int_{\Omega_n} \rho_{\mathcal{K}} u \cdot v_h \dif x
    \\ \nonumber
    & - \int_{\partial\mathcal{E}^N}\del{\sbr[0]{n_t + u\cdot n - \max\del[0]{n_t + u\cdot n,0}}u + \del[0]{p\mathbb{I} - 2\mu\varepsilon(u)}n} \cdot \bar{v}_h \dif s.
  \end{align}
  \Cref{eq:NSdisc-consistency} follows using \cref{eq:NS_mom} and
  \cref{eq:bcic_b}.

  We next show \cref{eq:levelset_disc_consistency}. By definition
  \cref{eq:definition_c}, integration by parts, and using that $\phi$,
  $\bar{m}_h$, and $u$ are single-valued on faces, we find
  \begin{equation}
    C_{ls}(\boldsymbol{\Phi}, \boldsymbol{m}_h; u)
    = \sum_{\mathcal{K} \in \mathcal{T}^n} \int_{\mathcal{K}} \del[0]{\partial_t\phi + u \cdot \nabla \phi} m_h \dif x\dif t
    + \sum_{\mathcal{K} \in \mathcal{T}^n}\int_{K^{n}} \phi m_h \dif x
    - \int_{\partial\mathcal{E}^-}\del[0]{n_t + u\cdot n}\phi \bar{m}_h \dif s.
  \end{equation}
  \Cref{eq:levelset_disc_consistency} follows using
  \cref{eq:NS_kin,eq:bcic_d}.
\end{proof}

\section{The solution algorithm}
\label{sec:coupling_and_meshmovement}

In this section we describe how we iteratively solve the
discretization of the two-fluid model and the level-set equation, and
how we update the mesh in each space-time slab.

\subsection{Coupling discretization and mesh deformation}
\label{ss:coupling}

Given the level-set function $\phi_h$ from space-time slab
$\mathcal{E}^{n-1}$ we create an initial mesh for space-time slab
$\mathcal{E}^n$. Using Picard iterations we solve the space-time HDG
discretization \cref{eq:PicardNSdisc} for the momentum and mass
equations until some convergence criterium has been met. The velocity
solution to the space-time HDG discretization is then used in the
space-time EDG discretization \cref{eq:levelset_disc} to update the
level-set function which in turn is used to update the mesh. We
continue updating the mesh and solving the space-time HDG and EDG
discretizations in space-time slab $\mathcal{E}^n$ until the following
stopping criterium is met:
\begin{equation}
  \label{eq:stoppingcritphi}
  \frac{\norm[0]{\phi_h^{n,m} - \phi_h^{n,m-1}}_{\infty}}{\norm[0]{\phi_h^{n,m} - \phi_h^{n,0}}_{\infty}} < \varepsilon_{\phi},
\end{equation}
where $\varepsilon_{\phi}$ is a user given parameter and
$\phi_h^{n,m}$ is the approximation to $\phi_h$ after $m$ iterations
in the $n^{\text{th}}$ space-time slab. The algorithm is described in
Algorithm \ref{alg:coupling}.

\begin{algorithm}[H]
  \caption{Coupling the discretization and mesh deformation}
  \label{alg:coupling}
  \begin{algorithmic}[1]
    \State Initialize the flow properties and the level-set function
    \State Set $n = 0$, $t_n = 0$
    \While{$t_n < t_N$}
    \State Set $m = 0$
    \State Create an initial space-time mesh for space-time slab $\mathcal{E}^n$ given $\phi^{n,m}_h$
    \While{$\phi^{n,m}_h$ does not satisfy \cref{eq:stoppingcritphi}}
    \State Solve the Navier--Stokes \cref{eq:NSdisc} using Picard iterations \cref{eq:PicardNSdisc} to get $(u_h^{n,m+1}, p_h^{n,m+1})$
    \State Given $u_h^{n,m+1}$, solve the level-set equation \cref{eq:levelset_disc} to obtain $\phi^{n,m+1}$
    \State Modify the space-time mesh according to $\phi^{n,m+1}$
    \State Set $m=m+1$
    \EndWhile
    \State Set $u_h^n = u_h^{n,m+1}$, $p_h^n = p_h^{n,m+1}$, $\phi_h^n = \phi_h^{n,m+1}$, and $t_n = t_{n+1}$
    \EndWhile
  \end{algorithmic}
\end{algorithm}

\subsection{Mesh movement}
\label{ss:meshmovement}

Recall that the shape of the subdomains $\Omega_\ell(t)$ and
$\Omega_g(t)$ depends on the position of the free-surface
$\Gamma_s(t)$. Once the discrete level-set function $\phi_h$ is
obtained by solving \cref{eq:levelset_disc}, the wave height must be
obtained in order to update the mesh nodes. Traditionally, using
standard discontinuous Galerkin methods for free-surface problems, the
approximation to the wave height is discontinuous. This implies that
the free-surface of the domain is not well defined and a
post-processing of the free-surface is required to address this
\cite{Gagarina:2014, Vegt:2007}. This mesh smoothing, however, may
require extra stabilization terms (see \cite{Aizinger:2006}).

Using the space-time embedded discontinuous Galerkin method
\cref{eq:levelset_disc} for the level-set function, mesh smoothing is
not required. This is because the facet approximation to the level-set
function, $\bar{\phi}_h$, is continuous on the mesh skeleton, see
\cref{eq:barMhn}. We therefore avoid needing any mesh smoothing mesh
that may lead to instabilities while maintaining all the conservation
properties that discontinuous Galerkin methods provide.

We next describe how to obtain the wave height from the level-set
function and subsequently how to move the mesh nodes. We first note
that $\bar{\phi}_h$ is the trace of an $H^1$ function. Denoting by
$M_h^c$ the space of functions of $M_h$ which are continuous on
$\mathcal{E}^n$, we denote by $\phi_h^c$ the function in $M_h^c$ that
coincides with $\bar{\phi}_h$ on the element boundaries. We note that
it is computationally cheap to find $\phi_h^c$ because it can be found
element-wise. By definition $\phi((x_1,x_2),t) = \zeta(x_1,t) - x_2$ and
so an approximation to the wave height, $\zeta_h$, can be obtained by
evaluating $\phi_h^c$ at $x_2=0$.

Once we have obtained $\zeta_h$ we update mesh nodes as follows. Let
$(x_{1,i}^0, x_{2,i}^0)$ denote the coordinates of node $i$ of the
undisturbed mesh ($\zeta_h = 0$), and let $(x_{1,i}^k, x_{2,i}^k)$
denote the coordinates of the node $i$ at time $t_k$. Denote by
$T_b(x_1)$ ($B_b(x_1)$) the maximum (minimum) $x_2$ value in $\Omega$
on the vertical through $x_1$. Then:

\begin{itemize}
\item If $x_{2,i}^0 < 0$, 
  \begin{equation}
    x_{2,i}^{k+1} = x_{2,i}^0 + \gamma_i^k \zeta_h(x_{1,i}^k,t_{k+1})
    \quad \text{where }
    \gamma_i^k = \frac{B_b(x_{1,i}^k) + x_{2,i}^0}{B_b(x_{1,i}^k)}.
  \end{equation}
  
\item If $x_{2,i}^0 > 0$, 
  \begin{equation}
    x_{2,i}^{k+1} = x_{2,i}^0 + \gamma_i^k\zeta_h(x_{1,i}^k,t_{k+1})
    \quad \text{where }
    \gamma_i^k = \frac{T_b(x_{1,i}^k) - x_{2,i}^0}{T_b(x_{1,i}^k)}.
  \end{equation}	
\end{itemize}

\section{Numerical results}
\label{sec:numerical_results}

All the simulations in this section were implemented using the Modular
Finite Element Method (MFEM) library \cite{mfem-library}. Furthermore,
as is common with interior penalty type discretizations, we set the
penalty parameter to $\alpha = 10k^2$ \cite{Riviere:book}.

\subsection{Convergence rates}
\label{ss:con_rates}

We first consider a manufactured solutions test case to numerically
determine the convergence rates of our discretization. We consider a
domain $\Omega = [-1, 1] \times [-1,1]$. The exact pressure is given
by $p((x_1,x_2),t) = (2+\cos(t))\sin(\pi x_1) \cos(\pi x_2)$, whereas
the exact velocity on the liquid and gas regions, respectively, are
given by
\begin{equation}
	u_j((x_1,x_2),t) = \begin{bmatrix}
		\frac{0.1}{\nu_j}(e^{-t} - 1)\sin(\pi(x_2 - \zeta(x_1,t))) + 2
		\\
		\frac{0.1}{\nu_j}(e^{-t} - 1)\sin(\pi(x_2 - \zeta(x_1,t))) \partial_{x_1}\zeta(x_1,t) + 2
	\end{bmatrix}, \quad j = \ell, g.
\end{equation}
Note that $u_\ell$ and $u_g$ satisfy the interface conditions
\cref{eq:interfaceassump}. The source term in the momentum equation
\cref{eq:NS_mom}, and the level-set equation \cref{eq:NS_kin} are
computed according to the analytical solution. We impose Neumann
boundary conditions at $x_2 = 1$, while Dirichlet boundary conditions
are imposed everywhere else, and we take the polynomial degree
$k=2$. First, we test the convergence of the fluid solver with
discontinuous density and viscosity, and a fixed interface located at
$x_2 = 0$ (i.e., $\zeta(x_1,t) = 0$). We take $\mu_\ell = 1, \, \mu_g = 0.1$ and
$\rho_\ell = 1, \, \rho_g = 0.1$. \Cref{tab:ConvRates_fixedinterface}
shows the $L^2$ errors in the pressure and velocity at the final time
$t_N=1$. Since the mesh is fixed in this case, we do not show the
rates of convergence in the level-set function. We see that, as
expected, the pressure error is $\mathcal{O}(h^k)$, while the velocity
error is $\mathcal{O}(h^{k+1})$. These rates of convergence are
expected when compared to theoretical results of the single phase
Navier--Stokes problem \cite{Kirk:2019b}.

\begin{center}
  \begin{table}[tbp]
    \centering
    \caption{Rates of convergence for test case in
      \cref{ss:con_rates}, with $\mu_\ell = 1, \, \mu_g = 0.1$ and
      $\rho_\ell = 1, \, \rho_g = 0.1$ and $x_2 = 0$.}
    \begin{tabular}{cccccc}
      \toprule
      {Elements per slab} & $\Delta t$
          & {$\norm[0]{p-p_h}_{L^2(\Omega_N)}$} & {Rate}
                          & {$\norm[0]{u-u_h}_{L^2(\Omega_N)}$} & {Rate}\\
      \midrule
      {96} & {0.1} & {6.03e-1} & {-} & {1.01e-2} & {-}\\
      {384} & {0.05} & {1.34e-1} & {2.17} & {1.60e-3} & {2.66}\\
      {1,536} & {0.025}  & {3.46e-2} & {1.95} & {2.22e-4} & {2.85}\\
      {6,144} & {0.0125} & {8.92e-3} & {1.96} & {2.87e-5} & {2.95}\\
      {24,576} & {0.00625} & {2.38e-3} & {1.91} & {3.64e-6} & {2.98} \\
      \bottomrule
    \end{tabular}
    \label{tab:ConvRates_fixedinterface}
  \end{table}
\end{center}

We now consider the case where the interface between the gas and
liquid regions is time dependent. In particular, we consider the case
where the exact location of the interface satisfies
$\zeta(x_1,t) = 0.1 \sin(\pi(x_1-t))$ and choose the exact velocity
and pressure expressions as above. We take
$(\mu_\ell,\rho_{\ell}) = (1,1)$ in the liquid region. In
\cref{tab:ConvRates_movinginterface_0.1,tab:ConvRates_movinginterface_0.01}
we present the results for, respectively,
$(\mu_g, \rho_g) = (0.1, 0.1)$ and $(\mu_g, \rho_g) = (0.01, 0.01)$ in
the gas region. We see that for both cases, the pressure converges
linearly, while the velocity and the level-set converge
quadratically. We attribute this loss in accuracy with respect to
\cref{tab:ConvRates_fixedinterface} to the fact that the interface
between the gas and liquid regions, which is not a linear function, is
being represented with a mesh consisting of straight
elements. In~\cite{Huynh:2013, Wang:2013}, it is shown that there is a
loss in accuracy when super-parametric mesh elements are not used.

Finally, we remark that in all simulations, the divergence of the
approximate velocity is of the order of machine precision, even on
deforming meshes.

\begin{center}
  \begin{table}[tbp]
    \centering
    \small
    \caption{Rates of convergence for test case in
      \cref{ss:con_rates}, with $\mu_\ell = 1, \, \mu_g = 0.1$ and
      $\rho_\ell = 1, \, \rho_g = 0.1$ and
      $x_2 = 0.1 \sin(\pi(x_1-t))$.}
    \begin{tabular}{cccccccc}
      \toprule
      {Elements per slab} & $\Delta t$
      & {$\norm[0]{p-p_h}_{L^2(\Omega_N)}$} & {Rate}
      & {$\norm[0]{u-u_h}_{L^2(\Omega_N)}$} & {Rate} & {$\norm[0]{\phi-\phi_h}_{L^2(\Omega_N)}$} & {Rate}\\
      \midrule
      {96} & {0.1} & {7.01e-1} & {-} & {3.84e-2} & {-} & {4.71e-3} & {-} \\
      {384} & {0.05} & {1.64e-1} & {2.10} & {7.00e-3} & {2.46} & {9.01e-4} & {2.39} \\
      {1,536} & {0.025}  & {4.41e-2} & {1.89} & {1.16e-3} & {2.59} & {1.71e-4} & {2.40}\\
      {6,144} & {0.0125} & {1.24e-2} & {1.83} & {2.31e-4} & {2.33} & {3.65e-5} & {2.23}\\
      {24,576} & {0.00625} & {3.85e-3} & {1.69} & {5.26e-5} & {2.13} & {8.48e-6} & {2.11} \\
      \bottomrule
    \end{tabular}
    \label{tab:ConvRates_movinginterface_0.1}
  \end{table}
\end{center}

\begin{center}
  \begin{table}[tbp]
    \centering
    \small
    \caption{Rates of convergence for test case in
      \cref{ss:con_rates}, with $\mu_\ell = 1, \, \mu_g = 0.01$ and
      $\rho_\ell = 1, \, \rho_g = 0.01$ and
      $x_2 = 0.1 \sin(\pi(x_1-t))$.}
    \begin{tabular}{cccccccc}
      \toprule
      {Elements per slab} & $\Delta t$
      & {$\norm[0]{p-p_h}_{L^2(\Omega_N)}$} & {Rate}
      & {$\norm[0]{u-u_h}_{L^2(\Omega_N)}$} & {Rate} & {$\norm[0]{\phi-\phi_h}_{L^2(\Omega_N)}$} & {Rate}\\
      \midrule
      {96} & {0.1} & {7.44e-1} & {-} & {4.12e-1} & {-} & {3.90e-2} & {-} \\
      {384} & {0.05} & {1.77e-1} & {2.07} & {8.42e-2} & {2.29} & {8.23e-3} & {2.24} \\
      {1,536} & {0.025}  & {4.63e-2} & {1.93} & {1.53e-2} & {2.46} & {1.47e-3} & {2.49}\\
      {6,144} & {0.0125} & {1.33e-2} & {1.80} & {2.74e-3} & {2.48} & {2.48e-4} & {2.57}\\
      {24,576} & {0.00625} & {4.19e-3} & {1.67} & {6.12e-4} & {2.16} & {5.63e-5} & {2.14} \\
      \bottomrule
    \end{tabular}
    \label{tab:ConvRates_movinginterface_0.01}
  \end{table}
\end{center}

\subsection{Energy stability}
\label{ss:energy_stab}

We next test the energy stability property of our method. We consider
a domain $\Omega = [-1, 1] \times [-1,1]$ and a mesh that contains
6144 tetrahedra per slab. This corresponds to a total of 260352
degrees of freedom (after static condensation) for the flow problem
and 12675 degrees of freedom (after static condensation) for the
level-set equation. The final time of the simulation is $t_N = 10$, and we consider three different time steps
$\Delta t = 0.2$, $0.1$, and $0.05$. The set up of this test case is
similar to \cite[Section 5.2]{Horvath:2020}. In the first space-time
slab, we consider the source term for the two-fluid Navier--Stokes
equations to be an element-wise random number. For the following slabs
it is then set to be zero. Homogeneous Dirichlet and Neumann boundary
conditions are considered throughout the simulation. The level-set
equation is solved using the velocity obtained from the two-fluid
equations, with source term, initial and boundary conditions all set
to zero. Moreover, the free-surface moves according to the solution of
the level-set equation. We take $\mu_\ell = 1$, $\rho_\ell=1$,
$\mu_g = 0.01$ and $\rho_g = 0.01$. \Cref{fig:kin_energy} shows the
evolution of the kinetic energy
$\frac{1}{2}\norm[0]{\sqrt{\rho}u}_{L^2(\Omega_{n})}$, for
$n=1,\ldots,N-1$. We observe that the energy decays for all the time
steps considered, i.e., that
$\frac{1}{2} \partial_t(\norm[0]{\sqrt{\rho}u}_{L^2(\Omega_{n})}) \le
0$. This suggests that our method is energy-stable under mesh movement
for the two-fluid Navier--Stokes problem with discontinuous density
and viscosity. The formal proof of this property, however, is outside
the scope of this article.

\begin{figure}
  \centering
  \includegraphics[scale=0.8]{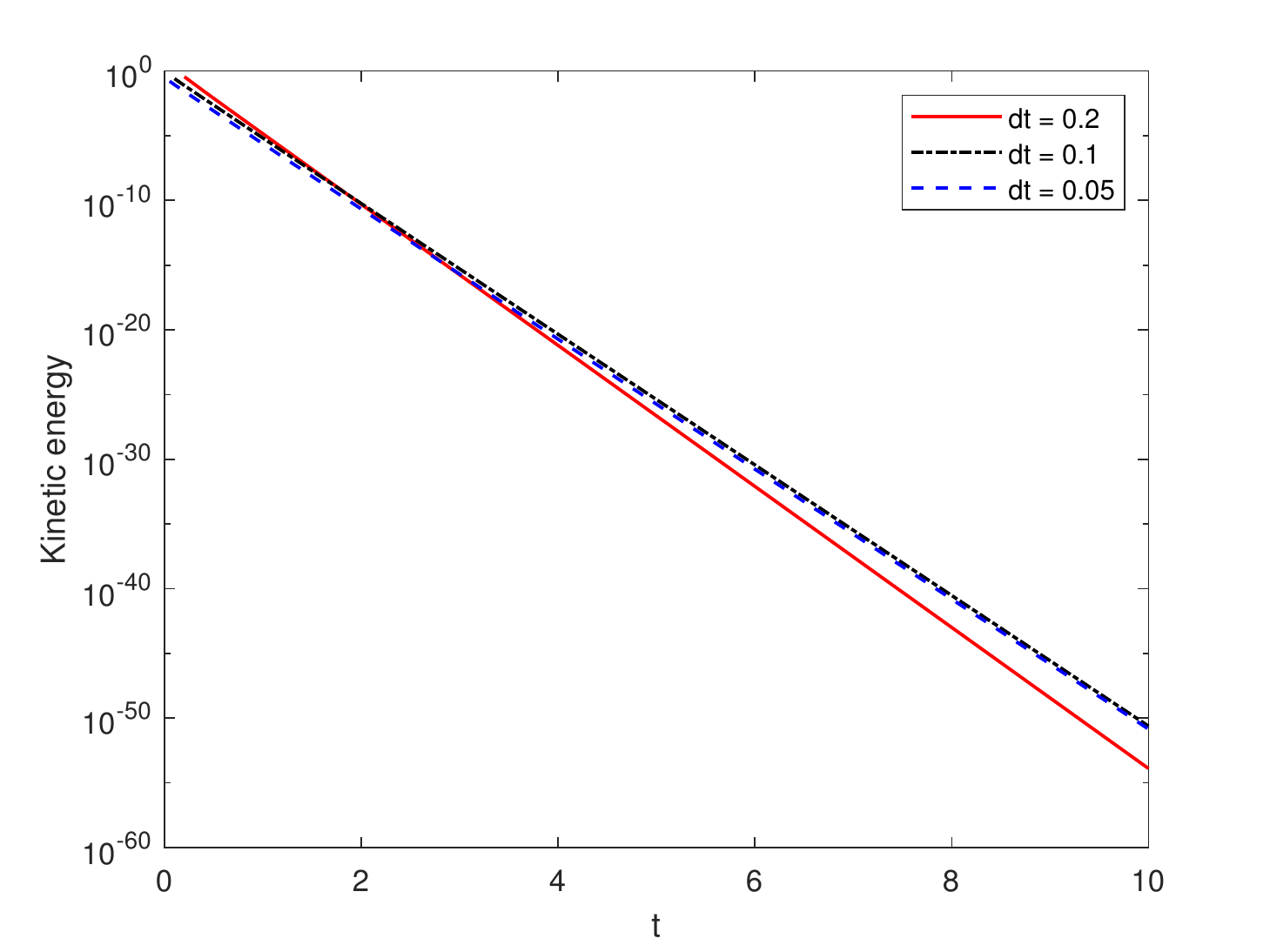}
  \caption{Evolution of the kinetic energy
    $\frac{1}{2}\norm[0]{\sqrt{\rho}u}_{L^2(\Omega_{n})}$ for
    different time steps, see \cref{ss:energy_stab}.}
  \label{fig:kin_energy}
\end{figure}

\subsection{Sloshing in a water tank}
\label{ss:sloshing}

We consider a small-amplitude periodic wave that is allowed to
oscillate freely in a rectangular tank with length that is twice the
depth of the still water level. The computational domain is
$\Omega = [-1,1]\times [-1,0.2]$. Initially the fluid is at rest and
the wave has a profile given by
\begin{equation}
\zeta_0(x_1) = 0.01\cos\del[0]{k(x_1 + 0.5)},
\end{equation}
where $k = 2\pi$ is the wave period. An analytical solution to the
linearized free-surface flow problem is given in \cite{Wu:2001}; given
a high enough Reynolds number and assuming a negligible influence of
the finite depth of the tank, the analytic wave height
$\zeta_{\text{ref}}$ is given by
\begin{equation}
  \frac{\zeta_{\text{ref}}(x_1,t)}{\zeta_0(x_1)}
  = 1 - \frac{1}{1+4\nu^2k^2/g}
  \sbr[2] {1 - e^{-2\nu k^2t}\del[2]{\cos(\sqrt{kg}t) +2\nu k^2\frac{\sin(\sqrt{kg}t)}{\sqrt{kg}}}},
  \label{eq:anal_sol_water_tank}
\end{equation}
where $\nu$ is the kinematic viscosity of the liquid which is set to
$\nu = 1/2000$. The density and viscosity ratios are both set to
$1:1000$. We consider two meshes, a structured mesh with 1152 spatial
triangles (3452 space-time tetrahedra), and a finer mesh with 2756
spatial triangles (8268 space-time tetrahedra) that is more refined
around the free-surface. At $x_2=-1$ we apply no-slip boundary
conditions, and at $x_2=0.2$ we apply a homogeneous Neumann boundary
conditions. The polynomial degree is $k = 2$. In \cref{fig:water_tank}
we compare the analytical solution of wave height elevation at the
middle of the tank ($x_1 = 0$) to the coarse and fine grid computed
approximations. On the coarse mesh the amplitude of the computed wave
height elevation dampens out as time progresses due to numerical
diffusion. On the fine mesh, however, the discrete wave height
elevation agrees well with the analytical solution.

\begin{figure}
  \centering
  \includegraphics[scale=0.3]{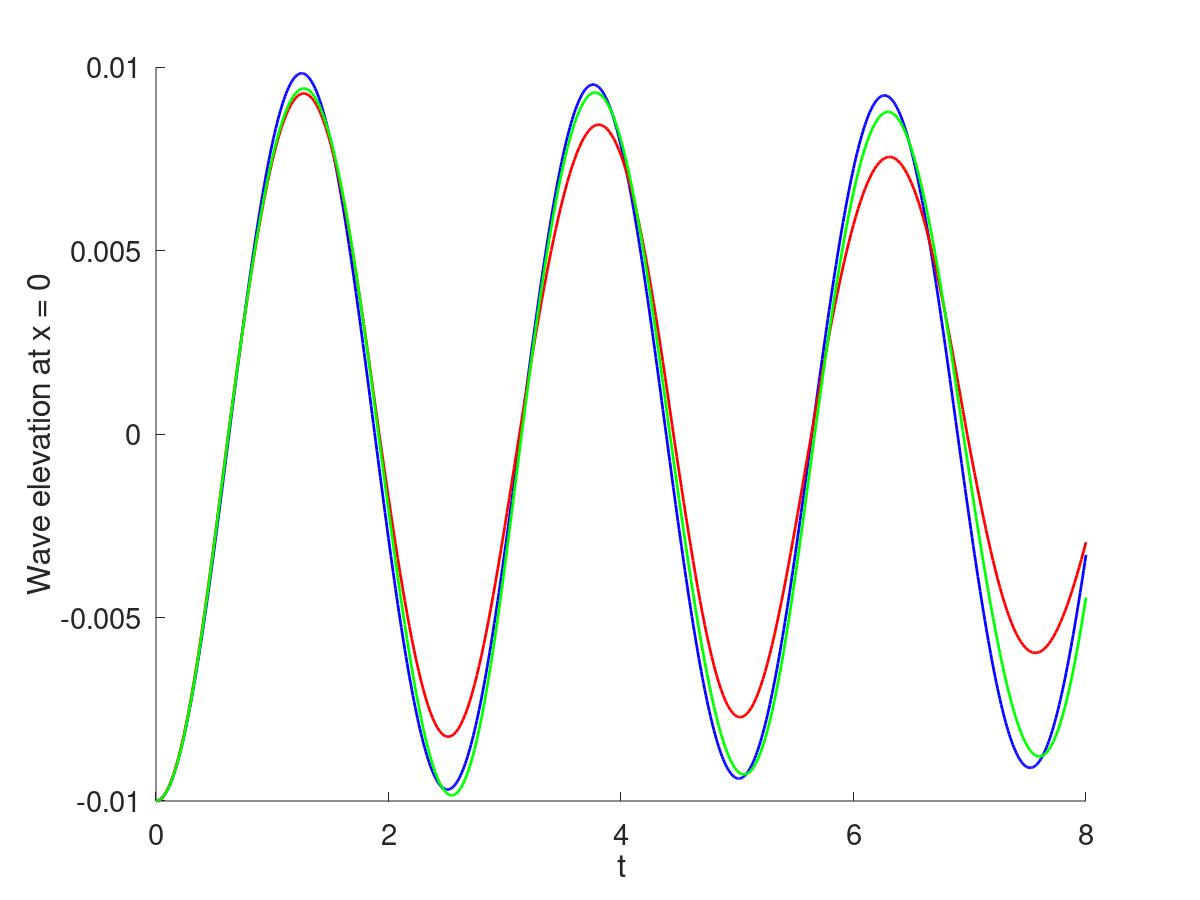}
  \caption{Wave elevation at $x_1 = 0$ with the anaytical solution
    \cref{eq:anal_sol_water_tank} (blue line), coarse mesh
    approximation (red line) and fine mesh approximation (green
    line).}
  \label{fig:water_tank}
\end{figure}

To show the mesh movement with the interface, we plot the mesh and
wave height at times $t = 0$ and $t = 3.86$ in
\cref{fig:mesh_waveheight}. A zoom of the mesh near the interface in
\cref{fig:mesh_waveheight}(c) and (d) show how the mesh conforms to
the interface.

\begin{figure}
	\centering
	\subfloat[Spatial mesh and wave height at $t = 0$.]
	{
		\includegraphics[width=\linewidth]{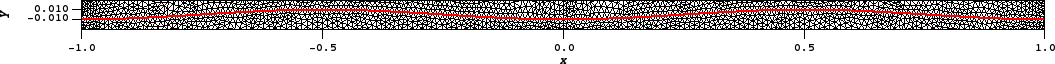}
	}
	\\
	\subfloat[Spatial mesh and wave height at $t = 3.86$]
	{
		\includegraphics[width=\linewidth]{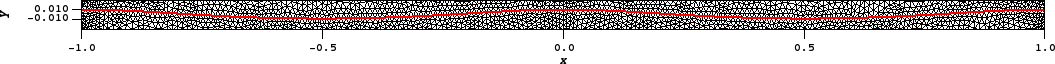}
	}
	\\
	\subfloat[Spatial mesh and wave height at $t = 0$.]
	{
		\includegraphics[width=0.5\linewidth]{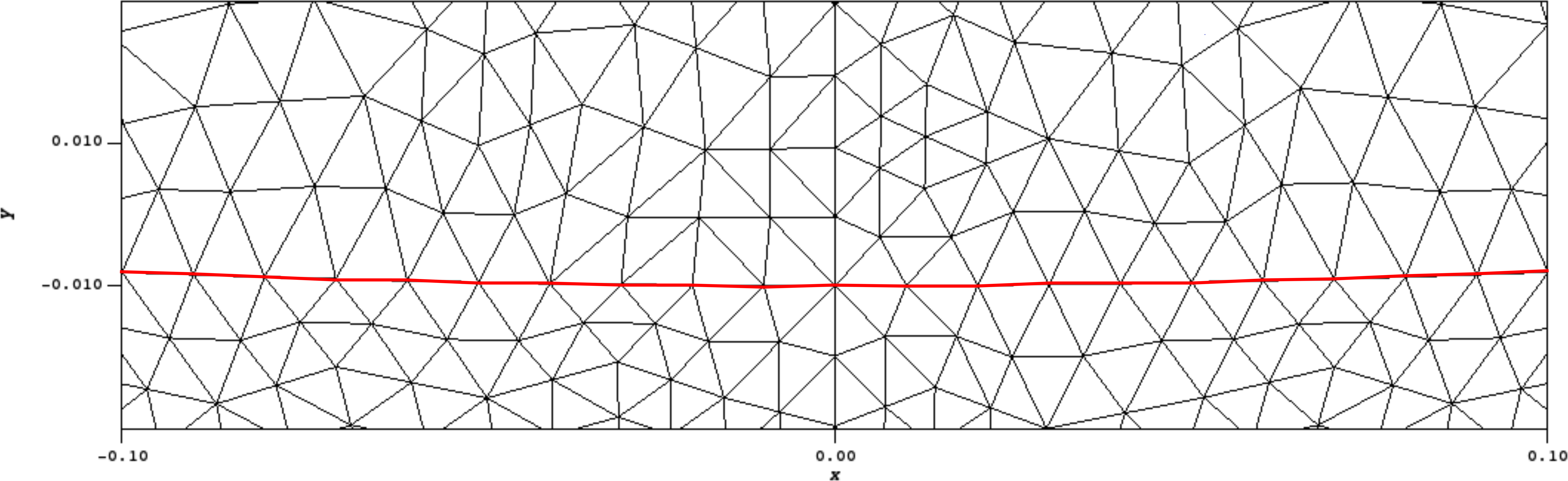}
	}
	\subfloat[Spatial mesh and wave height at $t = 3.86$]
	{
		\includegraphics[width=0.5\linewidth]{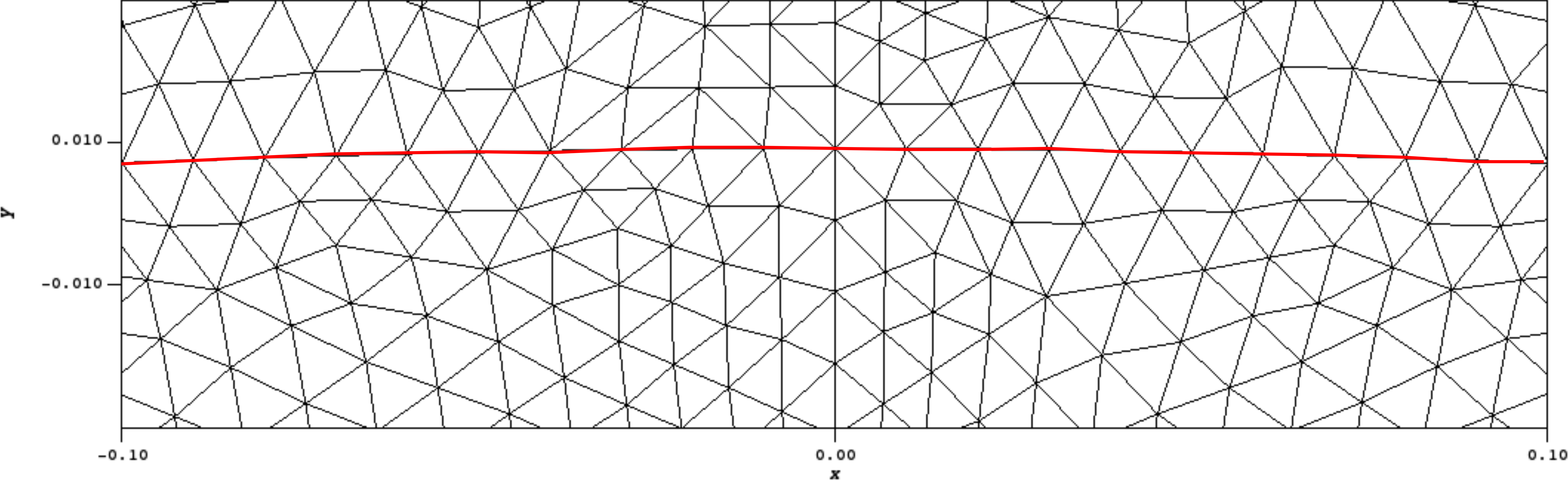}
	}  
	\caption{The spatial mesh at two instances in time for the test case
		described in \cref{ss:sloshing}. The top two figures are an
		extract of the mesh in $[-1,\,1]\times [-0.03,\,0.03]$. The bottom
		two figures zoom into the region
		$[-0.1,\,0.1]\times [-0.03,\,0.03]$. We indicate the wave height
		in all figures in red. Note that the mesh conforms to the
		interface.}
	\label{fig:mesh_waveheight}
\end{figure}
%

\subsection{Waves generated by a submerged obstacle}

In this final example, we consider waves in a channel generated by a
submerged cylinder. We consider a computational domain given by
$\Omega = [-8, 26]\times[-7,3]$ with a cylinder of radius $0.5$
located at $(0, -3)$ and the initial wave height set to $\zeta = 0$
(see \cref{fig:cylynder_dom}).
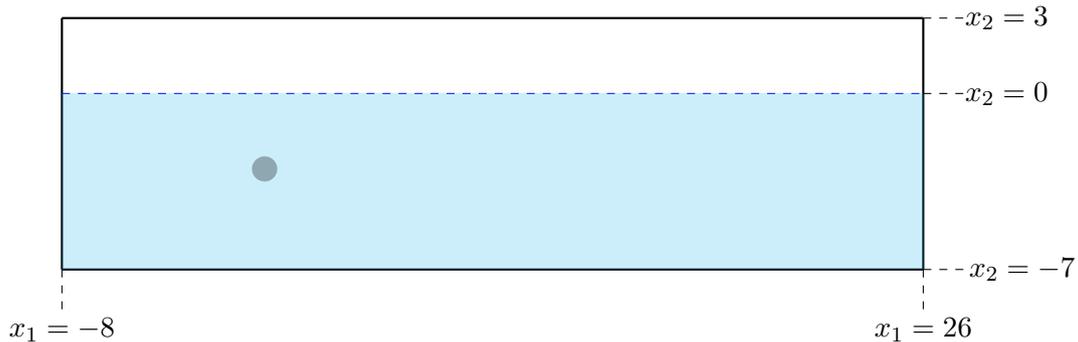
\begin{figure}[tbp]
  \begin{center}
    \begin{tikzpicture}[scale=1,important line/.style={thick}]      
      \draw[important line] (-8/3,-7/3) -- (26/3,-7/3) ;
      \draw[important line] (26/3,-7/3) -- (26/3,3/3) ;
      \draw[important line] (26/3,3/3) -- (-8/3,3/3) ;
      \draw[important line] (-8/3,3/3) -- (-8/3,-7/3) ;
      \draw[dashed, color=blue] (-8/3,0) -- (26/3,0);      
      \fill[color=cyan, fill opacity=0.2] (-8/3,-7/3) rectangle (26/3,0);      
      \fill[color = black, fill opacity = 0.3] (0, -3/3) circle (0.5/3);      
      \draw[dashed] (26/3,0) -- (26/3+0.6,0) ;
      \draw (26/3+1.1,0) node{$x_2 = 0$};      
      \draw[dashed] (26/3,3/3) -- (26/3+0.6,3/3) ;
      \draw (26/3+1.1,3/3) node{$x_2 = 3$};      
      \draw[dashed] (26/3,-7/3) -- (26/3+0.6,-7/3) ;
      \draw (26/3+1.3,-7/3) node{$x_2 = -7$};      
      \draw[dashed] (-8/3,-7/3) -- (-8/3,-7/3-0.6) ;
      \draw (-8/3,-7/3-0.8) node{$x_1 = -8$};      
      \draw[dashed] (26/3,-7/3) -- (26/3,-7/3-0.6) ;
      \draw (26/3,-7/3-0.8) node{$x_1 = 26$};
    \end{tikzpicture}    
  \end{center}
  \caption{Depiction of the flow domain
    $\Omega \subset \mathbb{R}^2$.}
  \label{fig:cylynder_dom}
\end{figure}
On the left, top and bottom boundaries of the domain, we impose
$\boldsymbol{u} = [0.54,0]^T$ whereas on the right boundary of the
domain we impose a homogeneous Neumann boundary condition. On the
boundary of the circle, homogeneous Dirichlet boundary conditions are
imposed. Moreover, $\boldsymbol{u}_0(x) = [0.54,0]^T$. For the
level-set function, we set $r = 0$ at the inflow part of the boundary
($x_1 = -8$). The density and viscosity ratios are
$\rho_g/\rho_\ell = 1/1000$ and $\mu_g/\mu_\ell = 0.01$. The time step
is taken as $\Delta t = 0.02$ and the space-time mesh contains 9312
tetrahedra.

In \cref{fig:cylinder} we show the velocity magnitude at time $t=6$ in
the liquid domain $\Omega_\ell$. We observe vortex shedding, as
observed also in single-phase flows (for example,
\cite{Schaefer:1996}), as well as waves being generated due to the
flow passing the obstacle.

\begin{figure}
  \centering
  \includegraphics[scale=0.8]{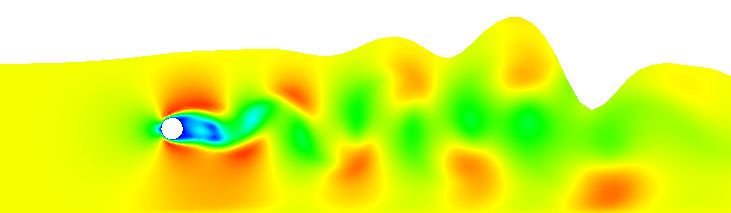}
  \caption{Velocity magnitude in $\Omega_\ell$ at time $t = 60$.}
  \label{fig:cylinder}
\end{figure}

\section{Conclusions}
\label{sec:conclusions}

We have presented a compatible, interface-tracking, exactly mass
conserving, space-time HDG/EDG discretization for the two-fluid
Navier--Stokes equations. The mesh moves with the zero-level set so
that the density and viscosity are always piece-wise constants with
discontinuity across the interface. Moreover, by using a space-time
EDG method for the level-set equation, we are able to obtain a
continuous approximation to the interface between the fluids; no
smoothing techniques are necessary to accommodate the mesh movement.

Our numerical simulations suggest that the method is energy-stable
under mesh movement, and that we can obtain optimal rates of
convergence considering that our mesh consists of straight
elements. Future work includes extending our discretization to using
curved elements near the interface to improve the accuracy of the
method and to extend our approach to more general two-fluid flow
problems such as rising bubbles in a column and fluid-structure
interaction problems.

\subsubsection*{Acknowledgements}

SR gratefully acknowledges support from the Natural Sciences and
Engineering Research Council of Canada through the Discovery Grant
program (RGPIN-05606-2015).

\bibliographystyle{elsarticle-num-names}
\bibliography{references}
\end{document}